\title{A remark on the ultrapower algebra of the hyperfinite factor}
\author{Ionut Chifan and Sayan Das}
\newtheorem{thm}{Theorem}[section]
\newtheorem{prop}[thm]{Proposition}
\newtheorem{lem}[thm]{Lemma}
\newtheorem*{remark}{Remark}
\newcommand{\ra}{{\rightarrow}}
\newcommand{\paN}{\mathcal{N}}
\newcommand{\paQ}{\mathcal{Q}}
\newcommand{\pN}{\mathcal{R}}
\newcommand{\paM}{\mathcal{M}}
\newcommand{\paP}{\mathcal{P}}
\newcommand{\email}{Email: } 
\begin{document}

\maketitle

\begin{abstract} \noindent On page 43 in \cite{Po83} Sorin Popa asked whether the following property holds: \emph{If $\omega$ is a free ultrafilter on $\mathbb N$ and $\mathcal \pN_1\subseteq \pN$ is an irreducible inclusion of hyperfinite II$_1$ factors such that $\pN'\cap \pN^\omega\subseteq \pN^\omega_1$ does it follows that $\pN_1=\pN$?} In this short note we provide an affirmative answer to this question. 
\end{abstract}

\section{Introduction}Central sequences were introduced in \cite{MvN36} as a tool to distinguish the hyperfinite II$_1$ factor $\mathcal R$ and the free group factor $L(\mathbb{F}_2)$. Later on, in the groundbreaking papers \cite{MD69a,MD69b, MD69c} D. McDuff analyzed the ultrapower and central sequence algebras of II$_1$ factors to exhibit uncountably many non-isomorphic II$_1$ factors. In his celebrated work \cite{Co76}, A. Connes furthered the study of central sequence algebras and ultrapowers in his proof of \textit{injective implies hyperfinite}, thereby underlining once again the importance of these objects. Since then, the study of ultrapowers and central sequences has played a central role in the theory of II$_1$ factors.
 \vskip 0.04in
 \noindent In $1967$, at the Baton Rouge conference, R.V. Kadison asked a series of influential (yet unpublished!) questions. One of the questions asked wether all maximal amenable subalgebras of a II$_1$ factor are isomorphic to $\mathcal R$. In a seminal paper, \cite{Po83}, S. Popa obtained the striking result that the generator masa in $L(\mathbb{F}_2)$ is maximal amenable, thus answering negatively Kadison's question. In \cite[Theorem 4.1]{Po83} it was also showed that whenever $\mathbb{F}_n \curvearrowright X,$ is a free, measure preserving action on a non-atomic probability space $X$, the $R_u= L^{\infty}(X) \rtimes \langle u\rangle$ is maximal injective subalgebra of $\paM= L^{\infty}(X) \rtimes \mathbb F_{n}$ (where $u$ is a canonical generator of $\mathbb F_n$). The proof relied on showing that if $\paN \supseteq R_u$ is injective subalgebra satisfying $\paN' \cap \paN^{\omega} \subseteq R_u^{\omega}$ then $R_u=\paN$. In turn this  was shown using heavily the notion of \textit{asymptotic orthogonality property} introduced in the same paper. This naturally led S. Popa to ask whether this phenomenon actually occurs in general:
 \textit {Let $\mathcal {R}_1 \subseteq \mathcal R$ be a hyperfinite subfactor such that $\mathcal {R}_1 ' \cap \mathcal R = \mathbb C$ and $\mathcal R' \cap \mathcal R^{\omega} \subseteq \mathcal {R}_1 ^{\omega}$ for some free ultrafilter $\omega$ on $\mathbb N$. Does it follow that $\mathcal R_1 =\mathcal R$}?
See \cite[Section $4.5$ Problem $2$]{Po83}.
 \vskip 0.04in
\noindent In this paper, we answer the aforementioned question in the affirmative (see Theorem \ref{solthm}). Thus the central sequence algebra of the hyperfinite II$_1$ factor cannot be absorbed by some nontrivial irreducible subfactor. Our approach relies upon an interplay between Popa's \textit{deformation/rigidity theory}, subfactor theory, and some basic analysis of central sequences (e.g.\ Ocneanu's central freedom lemma). We believe that this general result may have future applications to maximal amenability questions.

\subsection*{Acknowledgments} The authors warmly thank Adrian Ioana and Jesse Peterson for many helpful discussion, suggestions and for their encouragement. The authors would also like to thank the anonymous referee for his many useful suggestions which greatly improved the exposition of the paper. I.C. was partly supported by NSF Grant DMS \#1600688.

\section{Proof of the Main Result}

\noindent {\large \bf Popa intertwining techniques}. To study the structural theory of von Neumann algberas S. Popa has introduced the following notion of intertwining subalgebras which has been very instrumental in the recent developments in the classification of von Neumann algebras \cite{Po06,Va10icm,Io17icm}. Given (not necessarily unital) inclusions $\paP, \paQ\subseteq \paM$ of von Neumann subalgebras, one says that \emph{a corner of $\paP$ embeds into $\paQ$ inside $\paM$} and writes $\paP\prec_\paM \paQ$ if there exist nonzero projections $ p\in  \paP, q\in \paQ$, a $\ast$-homomorphism $\theta:p \paP p\rightarrow q\paQ q$  and a nonzero partial isometry $v\in q\paM p$ so that $\theta(x)v=vx$, for all $x\in p\paP p$. The partial isometry $v$ is also called an \emph{intertwiner} between $\paP$ and $\paQ$. If we moreover have that $\paP p'\prec_{\paM}\paQ$, for any nonzero projection  $p'\in \paP'\cap 1_\paP \paM 1_\paP$ (equivalently, for any nonzero projection $p'\in\mathcal Z(\paP'\cap 1_\paP \paM 1_\paP)$), then we write $\paP\prec_{\paM}^{s}\paQ$.
\vskip 0.04in

\noindent Then in \cite [Theorem 2.1 and Corollary 2.3]{Po03} Popa developed a powerful analytic method to identify intertwiners between arbitrary subalgebras of tracial von Neumann algebras.

\begin{thm}\cite{Po03} \label{corner} Let $(\paM,\tau)$ be a separable tracial von Neumann algebra and let $\paP, \paQ\subseteq \paM$ be (not necessarily unital) von Neumann subalgebras. 
Then the following are equivalent:
\begin{enumerate}
\item [i)]$\paP\prec_\paM \paQ$.
\item [ii)]For any group $\mathcal G\subset  U(\paP)$ such that $\mathcal G''= \paP$ there is no sequence $(u_n)_n\subset \mathcal G$ satisfying $\|E_{ \paQ}(xu_ny)\|_2\rightarrow 0$, for all $x,y\in  \paM$.
\end{enumerate}
\end{thm}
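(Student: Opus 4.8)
The plan is to realize condition (i) concretely inside the Jones basic construction and then connect it to (ii) through a minimal‑norm argument. Write $\langle\paM,e_{\paQ}\rangle$ for the basic construction of the inclusion $\paQ\subseteq\paM$ acting on $L^2(\paM)$, equipped with its canonical faithful normal semifinite trace $\mathrm{Tr}$ normalized by $\mathrm{Tr}(xe_{\paQ}y)=\tau(xy)$ for $x,y\in\paM$; recall the identities $e_{\paQ}xe_{\paQ}=E_{\paQ}(x)e_{\paQ}$ and $e_{\paQ}z=ze_{\paQ}$ for $z\in\paQ$. I will use the standard reformulation (also contained in \cite{Po03}): $\paP\prec_{\paM}\paQ$ \emph{if and only if} there is a nonzero projection $f\in\paP'\cap\langle\paM,e_{\paQ}\rangle$ with $\mathrm{Tr}(f)<\infty$. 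Indeed, such an $f$ makes $fL^2(\paM)$ a right $\paQ$-module of finite $\paQ$-dimension which is at the same time a left $\paP$-module, so the left action embeds a corner of $\paP$ into a matrix amplification of $\paQ$; unwinding this embedding manufactures the partial isometry $v$ and the $\ast$-homomorphism $\theta$ in the definition of $\prec_{\paM}$, and conversely an intertwiner produces such a finitely generated $\paP$--$\paQ$ subbimodule of $L^2(\paM)$, hence such an $f$. Granting this, the theorem reduces to showing that (ii) is equivalent to the existence of such an $f$.

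For (ii) $\Rightarrow$ (i): fix a group $\mathcal G\subseteq U(\paP)$ with $\mathcal G''=\paP$ and assume (ii). I would first show, by a diagonal argument exploiting the separability of $\paM$, that there exist $\delta>0$ and finitely many $x_1,\dots,x_n\in\paM$ with
\[
\sum_{i,j}\|E_{\paQ}(x_iux_j)\|_2^2\;\ge\;\delta\qquad\text{for all }u\in\mathcal G .
\]
Were this false, then, fixing a $\|\cdot\|_2$-dense sequence in the unit ball of $\paM$ and letting $\delta\to 0$ along an exhaustion by finite subfamilies, one could extract $(u_m)\subseteq\mathcal G$ with $\|E_{\paQ}(xu_my)\|_2\to 0$ for all $x,y\in\paM$, contradicting (ii). Next, a short computation in $\langle\paM,e_{\paQ}\rangle$ (using $e_{\paQ}ae_{\paQ}=E_{\paQ}(a)e_{\paQ}$ and cyclicity of $\mathrm{Tr}$) rewrites the left-hand side as $\mathrm{Tr}(A\,uBu^{*})$, where $A:=\sum_i x_i^{*}e_{\paQ}x_i$ and $B:=\sum_j x_je_{\paQ}x_j^{*}$ are positive elements of the Hilbert space $L^2(\langle\paM,e_{\paQ}\rangle,\mathrm{Tr})$, whose norm I denote $\|\cdot\|_{2,\mathrm{Tr}}$.

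The crux is then a convexity/minimal-norm argument. Let $\mathcal K$ be the $\|\cdot\|_{2,\mathrm{Tr}}$-closed convex hull of $\{uBu^{*}:u\in\mathcal G\}$ in $L^2(\langle\paM,e_{\paQ}\rangle,\mathrm{Tr})$; since unitary conjugation is $\|\cdot\|_{2,\mathrm{Tr}}$-isometric, $\mathcal K$ is bounded, hence has a unique element $b_0$ of minimal norm. Conjugation by any $v\in\mathcal G$ permutes the generating set of $\mathcal K$ and is isometric, so it preserves $\mathcal K$ and therefore fixes $b_0$; thus $b_0$ commutes with $\mathcal G$ and hence with $\paP=\mathcal G''$. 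Moreover $b_0\ge 0$ since it lies in the closed positive cone, and $b_0\ne 0$ because $\xi\mapsto\mathrm{Tr}(A\xi)$ is $\|\cdot\|_{2,\mathrm{Tr}}$-continuous and $\ge\delta$ on the generating set, hence on all of $\mathcal K$, forcing $\mathrm{Tr}(Ab_0)\ge\delta>0$. Finally, for small $\epsilon>0$ the spectral projection $f:=\mathbbm{1}_{[\epsilon,\infty)}(b_0)$ is a nonzero projection in $\paP'\cap\langle\paM,e_{\paQ}\rangle$ with $\mathrm{Tr}(f)\le\epsilon^{-2}\mathrm{Tr}(b_0^{2})<\infty$, which by the reformulation yields $\paP\prec_{\paM}\paQ$.

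For (i) $\Rightarrow$ (ii): assume (i) and fix the finite-trace projection $f\in\paP'\cap\langle\paM,e_{\paQ}\rangle$; let $\mathcal G\subseteq U(\paP)$ with $\mathcal G''=\paP$ and suppose for contradiction that some $(u_m)\subseteq\mathcal G$ has $\|E_{\paQ}(xu_my)\|_2\to 0$ for all $x,y\in\paM$. Since $u_m$ commutes with $f$, we get $\mathrm{Tr}(f)=\mathrm{Tr}(f^{2})=\mathrm{Tr}(u_mfu_m^{*}f)$ for every $m$. On the other hand, for $x,x',y,y'\in\paM$ one computes
\[
\mathrm{Tr}\bigl(u_m(xe_{\paQ}y)\,u_m^{*}\,(x'e_{\paQ}y')\bigr)=\tau\bigl(E_{\paQ}(yu_m^{*}x')\,E_{\paQ}(y'u_mx)\bigr),
\]
whose modulus is $\le\|yu_m^{*}x'\|_2\,\|E_{\paQ}(y'u_mx)\|_2\le\|x'\|_\infty\|y\|_2\,\|E_{\paQ}(y'u_mx)\|_2\to 0$. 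Using the $\|\cdot\|_{2,\mathrm{Tr}}$-density of $\operatorname{span}\{xe_{\paQ}y:x,y\in\paM\}$ in $L^2(\langle\paM,e_{\paQ}\rangle,\mathrm{Tr})$ together with the uniform bound $\|u_m\|=1$ (so that a telescoping Cauchy--Schwarz estimate controls the error of replacing $f$ by a linear combination of such elementary tensors), one concludes $\mathrm{Tr}(u_mfu_m^{*}f)\to 0$, hence $\mathrm{Tr}(f)=0$, a contradiction. So no such sequence exists and (ii) holds. The genuinely substantive point is the minimal-norm argument of the third paragraph, which produces a $\paP$-central element out of the uniform estimate; the main bookkeeping obstacle is the reformulation invoked in the first paragraph, namely the passage back and forth between an intertwiner $(p,q,\theta,v)$ and a finitely generated $\paP$--$\paQ$ subbimodule of $L^2(\paM)$, while the diagonal extraction and the computations inside $\langle\paM,e_{\paQ}\rangle$ are routine.
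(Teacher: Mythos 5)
The paper does not prove Theorem \ref{corner}; it is quoted verbatim from \cite{Po03} (see also the appendix of \cite{Po01}), so there is no in-paper argument to compare against. Your proposal is precisely Popa's original proof --- the reformulation via a finite-trace projection in $\paP'\cap\langle\paM,e_{\paQ}\rangle$, the diagonal extraction using separability, and the minimal-norm element of the closed convex hull of $\{uBu^{*}\}$ --- and the argument is correct as written (the only points worth spelling out in a full write-up are that $b_0$, a priori only in $L^2(\mathrm{Tr})$, is in fact a bounded element of $\paP'\cap\langle\paM,e_{\paQ}\rangle$ since the operator-norm ball is $\|\cdot\|_{2,\mathrm{Tr}}$-closed, and that passing from the finite right-$\paQ$-dimensional bimodule to a $\ast$-homomorphism landing in $q\paQ q$ rather than in a corner of $M_n(\mathbb C)\otimes\paQ$ requires the usual cutting-down step).
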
 
\vskip 0.02in

\noindent In order to show our main result we need the following technical result on intertwining. \begin{lem}\label{nointertwining} Let $\omega$ is a free ultrafilter on $\mathbb N$. Let $\paN\subseteq \paM$ be an inclusion of hyperfinite II$_1$ factors such that $\paM\nprec_\paM \paN$. Then we have $\paM'\cap \paM^\omega \nprec_{\paM^\omega} \paN^\omega$. In particular if $\paM'\cap \paM^\omega \subseteq  \paN^\omega$  then $\paM\prec_\paM \paN$.    
\end{lem}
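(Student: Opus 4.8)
The plan is to prove the first assertion; the ``in particular'' part then follows at once, since $\paM'\cap\paM^\omega\subseteq\paN^\omega$ gives trivially $\paM'\cap\paM^\omega\prec_{\paM^\omega}\paN^\omega$ (use a nonzero projection of $\paM'\cap\paM^\omega$ as intertwiner and the inclusion as the $\ast$-homomorphism), whence the contrapositive of the first assertion forces $\paM\prec_\paM\paN$. So assume $\paM\nprec_\paM\paN$. By Theorem \ref{corner} applied inside $\paM^\omega$ to the group $\mathcal{U}(\paM'\cap\paM^\omega)$ (which generates $\paM'\cap\paM^\omega$), it is enough to produce a sequence of unitaries $(w_m)_m$ in $\paM'\cap\paM^\omega$ with $\|E_{\paN^\omega}(X w_m Y)\|_2\to 0$ as $m\to\infty$ for all $X,Y\in\paM^\omega$.

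\emph{Central unitaries spread relative to $\paN$ over $\paM$.} As $\paM$ is hyperfinite, fix an increasing chain $\paP_1\subseteq\paP_2\subseteq\cdots$ of full matrix subalgebras with $\overline{\bigcup_k\paP_k}^{\,\mathrm{wot}}=\paM$, and set $\paQ_k:=\paP_k'\cap\paM$, so that $\paM=\paP_k\bar\otimes\paQ_k$ and $[\paM:\paQ_k]<\infty$. A standard fact on finite index inclusions --- obtained by enlarging a $\paQ_k$--$\paN$ subbimodule $\mathcal{K}\subseteq L^2(\paM)$ that is finitely generated over $\paN$ to $\overline{\paM\mathcal{K}}$, which is an $\paM$--$\paN$ subbimodule still finitely generated over $\paN$ since $\paP_k$ is finite dimensional --- shows that $\paQ_k\prec_\paM\paN$ would imply $\paM\prec_\paM\paN$; hence $\paM\nprec_\paM\paN$ gives $\paQ_k\nprec_\paM\paN$ for every $k$. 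By Theorem \ref{corner} there are sequences $(u^{(k)}_n)_n\subseteq\mathcal{U}(\paQ_k)$ with $\|E_\paN(x\,u^{(k)}_n\,y)\|_2\to 0$ for all $x,y\in\paM$. A diagonal choice over $k$ and a $\|\cdot\|_2$-dense sequence in the unit ball of $\paM$ then produces $v_k\in\mathcal{U}(\paQ_k)$ so that, as $v_k$ commutes with $\paP_k$ and $\paP_k\nearrow\paM$, the unitary $v:=(v_k)_k$ lies in $\paM'\cap\paM^\omega$ and satisfies $E_{\paN^\omega}(a\,v\,b)=0$ for all $a,b\in\paM$.

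\emph{Upgrading from $\paM$ to $\paM^\omega$.} Such a $v$ does not by itself work in Theorem \ref{corner} inside $\paM^\omega$: e.g.\ $X=v^\ast$, $Y=1$ give $E_{\paN^\omega}(v^\ast v)=1$, so a constant sequence will not do. Instead one carries out the previous construction infinitely many times, invoking Ocneanu's central freedom lemma to arrange that the successive central unitaries $w_1,w_2,\dots$ --- each still satisfying $E_{\paN^\omega}(a\,w_m\,b)=0$ for all $a,b\in\paM$ --- are mutually freely independent as central sequences and, as much as their commutation with $\paM\supseteq\paN$ permits, freely independent from $\paN^\omega$ (with the appropriate amalgamation over the central sequence algebra, each $w_m$ being centered for $E_{\paN^\omega}$). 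The vanishing over $\paM$ from the previous step kills the ``diagonal'' contributions to $E_{\paN^\omega}(X\,w_m\,Y)$, and freeness forces the remaining contributions to have $\|\cdot\|_2$-norm tending to $0$ for arbitrary $X,Y\in\paM^\omega$. Theorem \ref{corner} then yields $\paM'\cap\paM^\omega\nprec_{\paM^\omega}\paN^\omega$, as desired.

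The heart of the matter, and the main obstacle, is this last step: upgrading the spreading condition from $\paM$ to all of $\paM^\omega$ while keeping the unitaries inside the central sequence algebra. A naive diagonalization is unavailable, since the quantifier over $X,Y$ now runs over the non-separable $\paM^\omega$, and this is exactly what forces one to bring in the freeness supplied by Ocneanu's central freedom lemma. One must also be slightly careful that Theorem \ref{corner} is stated for separable algebras whereas $\paM^\omega$ and $\paN^\omega$ are not --- handled either by the form of Popa's criterion valid for arbitrary tracial von Neumann algebras, or by first passing to suitable separable subalgebras containing the projections and intertwiners involved.
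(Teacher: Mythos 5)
Your first step (showing $\paQ_k=\paP_k'\cap\paM\nprec_\paM\paN$ and extracting a unitary $v=(v_k)_k\in\paM'\cap\paM^\omega$ with $E_{\paN^\omega}(avb)=0$ for $a,b\in\paM$) matches the paper's opening moves. But the step you yourself flag as ``the heart of the matter'' --- upgrading from $a,b\in\paM$ to $X,Y\in\paM^\omega$ --- is not actually carried out, and the mechanism you propose does not work as described. Ocneanu's central freedom lemma computes a relative commutant, $(\pN'\cap\paP^\omega)'\cap\paQ^\omega=\pN\vee(\paP'\cap\paQ)^\omega$; it does not produce free independence of central sequences from $\paN^\omega$, and no argument is given for why ``freeness forces the remaining contributions to have $\|\cdot\|_2$-norm tending to $0$.'' As written, this is a genuine gap: the conclusion is asserted, not derived.

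The resolution in the paper is more elementary and bypasses freeness entirely. The key observation you are missing is that your diagonal construction proves more than you use: it gives the vanishing condition for any \emph{finite} subset $F$ of $\paM^\omega$, not just of $\paM$. Indeed, if $x^\omega=(x_n)_n,y^\omega=(y_n)_n\in\paM^\omega$, then for each fixed $n$ one applies Popa's criterion \emph{in $\paM$} to the finitely many $n$-th entries and to the algebra $\paM_{s_n}'\cap\paM$ to get a unitary $v_n\in\paM_{s_n}'\cap\paM$ with $\|E_\paN(x_nv_ny_n)\|_2\leq n^{-1}$ for all pairs from $F$; then $v^\omega=(v_n)_n\in\paM'\cap\paM^\omega$ satisfies $E_{\paN^\omega}(x^\omega v^\omega y^\omega)=0$ for all $x^\omega,y^\omega\in F$. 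The price is that $v^\omega$ depends on $F$, so one cannot feed a single sequence of unitaries into the sequential form of Theorem \ref{corner}; instead one either uses the net version of the criterion (valid without separability), or, as the paper does, argues by contradiction directly from the definition of $\prec$: assuming an intertwiner $w^\omega\in q^\omega\paM^\omega p^\omega$ exists, apply the claim only to $F=\{w^\omega,(w^\omega)^*\}$, choosing the levels $t_n$ so that $p^\omega\in\prod_{n\ra\omega}\paM_{t_n}$ and hence the resulting unitary $u^\omega\in\prod_{n\ra\omega}(\paM_{t_n}'\cap\paM)$ commutes with $p^\omega$. The intertwining relation then gives $0=\|E_{\paN^\omega}(w^\omega u^\omega p^\omega(w^\omega)^*)\|_2=\|\phi(u^\omega p^\omega)E_{\paN^\omega}(w^\omega(w^\omega)^*)\|_2=\|E_{\paN^\omega}(w^\omega(w^\omega)^*)\|_2$, forcing $w^\omega=0$. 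You should replace your freeness heuristic with this finite-set/contradiction argument.
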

\begin{proof} Since $\mathcal M$ is hyperfinite there exists an ascending sequence of algebras $\paM_n\subseteq \paM$ satisfying $\paM_n\cong M_{2^n}(\mathbb C)$, $\overline{\cup_n\paM_n}^{sot}=\paM$, and $\paM=\paM_n\bar \otimes (\paM_n'\cap \paM)$ for all $n$. Next we briefly argue that $\paM_n'\cap\paM \nprec_\paM \paN$, for all $n $. Assuming otherwise, by \cite[Lemma 2.5]{Va10} there exists a non-zero projection $e\in (\paM_n'\cap \paM)'\cap \paM=\paM_n$ such that $(\paM_n'\cap\paM) e\prec^s_\paM \paN$. Also since $\paM_n$ is finite dimensional then $[e\mathcal Me:(\paM_n'\cap \paM)e]<\infty$ and hence $\paM \prec_\paM (\paM_n'\cap \paM)e$. Using \cite[Remark 3.8]{Va10} we would get that $\paM\prec_\paM\paN$, a contradiction. 
\vskip 0.04in 
\noindent Fix $(s_n)_n\subseteq \mathbb N$ a sequence that tends to $\infty$. Next we claim that for every finite set $F\subset \paM^\omega$ there exists a unitary $v^\omega\in \prod_{n\ra \omega} (\paM_{s_n}'\cap\paM)$ such that $E_{\paN^\omega}(x^\omega v^\omega y^\omega)=0$, for all $x^\omega,y^\omega\in F$.  This relies on the usage of the analytic criterion from Popa's intertwining techniques, i.e.\ part ii) of Theorem \ref{corner}.  Since for every $n\in \mathbb N$ we have $\paM_{s_n}'\cap\paM \nprec \paN$ there exists a unitary $v_n\in  \paM_{s_n}'\cap\paM$ such that $\|E_\paN(x_nv_n y_n)\|_2\leq n^{\text{-1}}$, for all $x^\omega=(x_n)_n,y^\omega=(y_n)_n\in F$. Letting $v^\omega=(v_n)_n\in \prod_{n\ra\omega}\paM_{s_n}'\cap\paM \subset \paM'\cap\paM^\omega$ the previous inequalities show that $E_{\paN^\omega}(x^\omega v^\omega y^\omega)=0$ for all $x^\omega,y^\omega\in F$, as desired. 
\vskip 0.04in 
\noindent Assume by contradiction $\paM'\cap \paM^\omega \prec_{\paM^\omega} \paN^\omega$. Thus one can find projections $0\neq p^\omega \in \paM'\cap \paM^\omega$,$0\neq q^\omega\in  \paN^\omega$, a partial isometry $0\neq w^\omega\in \paM^\omega$, and  a unital $\ast$-homomorphism $\phi: p^\omega (\paM'\cap \paM^\omega) p^\omega \ra  q^\omega\paN^\omega q^\omega$  such that    \begin{equation}\label{601'}\phi(x)w^\omega=w^\omega x\text{ for all }x\in  p^\omega(\paM'\cap \paM^\omega) p^\omega.\end{equation}

\noindent Since $p^\omega\in \paM^\omega=\overline{\cup_n\paM_n}^{sot}$
there exists a sequence $(t_n)_n\subseteq \mathbb N$ that tends to $\infty$ for which $p^\omega \in \prod_{n\ra\omega} \paM_{t_n}$. Using our claim for the sequence $t_n$ and the set $F=\{w^\omega,(w^\omega)^*\}$ one can find a unitary $u^\omega\in \prod_{n\ra \omega} (\paM_{t_n}'\cap\paM)\subseteq \paM'\cap \paM^\omega $ such that $E_{\paN^\omega}(w^\omega u^\omega (w^\omega)^*)=0$. Using this in combination with (\ref{601'}) and $p^\omega u^\omega=u^\omega p^\omega$ we further get that $0=\|E_{\paN^\omega}(w^\omega p^\omega u^\omega p^\omega(w^\omega)^* )\|_2=\|\phi( p^\omega u^\omega p^\omega)  E_{\paN^\omega}(w^\omega (w^\omega)^* )\|_2=\|\phi( u^\omega p^\omega)  E_{\paN^\omega}(w^\omega (w^\omega)^* )\|_2=\|  E_{\paN^\omega}(w^\omega (w^\omega)^* )\|_2 $. This implies that $E_{\paN^\omega}(w^\omega (w^\omega)^* )=0$ and hence $w^\omega =0$, which is a contradiction.  \end{proof}
\begin{remark} \noindent Theorem \ref{corner} also holds without separability assumptions if one uses nets instead of sequences. So the second part of the proof of Lemma \ref{nointertwining} can be directly deduced from Theorem \ref{corner} applied in $\mathcal M^{\omega}$. The authors would like to thank the anonymous referee for pointing this out.
 \end{remark}

\begin{prop}\label{finindexcase}Let $\paN\subseteq \paM$ be II$_1$ factors such that $\paN'\cap \paM =\mathbb C1$. Then $\paM\prec_\paM \paN$ if and only if $[\paM:\paN]<\infty$.\end{prop}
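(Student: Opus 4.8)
The plan is to treat the two implications separately; the direction ``finite index $\Rightarrow$ intertwining'' is elementary, and the converse is where the hypotheses are spent.

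\emph{Finite index implies $\paM\prec_\paM\paN$.} First I would fix a finite Pimsner--Popa basis $m_1,\dots,m_k\in\paM$, so that $x=\sum_{i}m_iE_\paN(m_i^*x)$ for every $x\in\paM$, and then apply criterion ii) of Theorem~\ref{corner} to the generating group $\mathcal G=U(\paM)$. For a unitary $u\in\paM$ one has
\[
1=\|u\|_2^2=\sum_{i=1}^k\big\langle E_\paN(m_i^*u),\,m_i^*u\big\rangle\le\Big(\max_{j}\|m_j\|_2\Big)\sum_{i=1}^k\|E_\paN(m_i^*u)\|_2 ,
\]
so $\max_i\|E_\paN(m_i^*u)\|_2$ is bounded below by a constant independent of $u$; hence no sequence of unitaries $u_n\in U(\paM)$ can have $\|E_\paN(xu_ny)\|_2\to0$ for all $x,y\in\paM$, and Theorem~\ref{corner} yields $\paM\prec_\paM\paN$. (The irreducibility hypothesis is not needed for this direction.)

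\emph{$\paM\prec_\paM\paN$ implies finite index.} Here I would pass to the Jones basic construction $\paM\subseteq\langle\paM,e_\paN\rangle\subseteq B(L^2(\paM))$ with its canonical semifinite trace $\mathrm{Tr}$, normalized by $\mathrm{Tr}(xe_\paN y)=\tau(xy)$, and invoke the form of Popa's intertwining theorem \cite[Theorem~2.1]{Po03} asserting that $\paM\prec_\paM\paN$ holds if and only if $\paM'\cap\langle\paM,e_\paN\rangle$ contains a nonzero projection $f$ with $\mathrm{Tr}(f)<\infty$ (starting only from criterion ii) of Theorem~\ref{corner}, such an $f$ is produced as a spectral projection of the $U(\paM)$-central, $\mathrm{Tr}$-finite, nonzero element of minimal $2$-norm in the weakly closed convex hull of $\{ue_\paN u^*:u\in U(\paM)\}$, whose existence is exactly the negation of ii)). The decisive step is then to compute, using $\langle\paM,e_\paN\rangle=(J\paN J)'$ with $J$ the modular conjugation of $(\paM,\tau)$,
\[
\paM'\cap\langle\paM,e_\paN\rangle=J\paM J\cap(J\paN J)'=J(\paN'\cap\paM)J=\mathbb C 1 ,
\]
the middle equality because $JmJ$ commutes with $J\paN J$ precisely when $m\in\paN'\cap\paM$. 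Thus $f=1$, so $\mathrm{Tr}(1)<\infty$; evaluating $\mathrm{Tr}$ on any decomposition $1=\sum_i m_ie_\paN m_i^*$ gives $\mathrm{Tr}(1)=\sum_i\|m_i\|_2^2=[\paM:\paN]$, and hence $[\paM:\paN]<\infty$.

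\emph{Where the obstacle is.} All the content sits in the second implication: one must promote the a priori \emph{local} datum of an intertwiner (a $*$-homomorphism defined only on a corner $p\paM p$) to the \emph{global} finiteness of $[\paM:\paN]$, and it is precisely here that the two hypotheses are used --- irreducibility collapses $\paM'\cap\langle\paM,e_\paN\rangle$ to the scalars, while factoriality is what identifies $\mathrm{Tr}(1)$ with the index. Should one wish to avoid citing the projection form of Popa's theorem, this promotion can instead be done by hand at the level of bimodules: normalize the intertwiner $v$ so that $v^*v$ is a projection $p\in\paM$ (possible since $(p\paM p)'\cap p\paM p=\mathbb Cp$ by factoriality), read off from $v^*\theta(x)=xv^*$ that $\overline{v^*\paN}^{\,\|\cdot\|_2}$ is a $p\paM p$--$\paN$ sub-bimodule of $L^2(\paM)$ that is singly generated over $\paN$ on the right, enlarge it to $\mathcal K:=\overline{\paM v^*\paN}^{\,\|\cdot\|_2}$ using partial isometries $w_1,\dots,w_k\in\paM$ with $w_j^*w_j\le p$ and $\sum_j w_jw_j^*=1$ (which keeps $\mathcal K$ of finite right $\paN$-dimension, namely $\le k$), note that the $\paM$--$\paN$ bimodule $L^2(\paM)$ is irreducible because its commutant is $J(\paN'\cap\paM)J=\mathbb C1$, and conclude $\mathcal K=L^2(\paM)$, i.e.\ $L^2(\paM)$ is a finite right $\paN$-module, i.e.\ $[\paM:\paN]<\infty$.
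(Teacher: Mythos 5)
Your argument is correct, and both implications are handled differently from the paper. For the hard direction the paper works directly with the intertwiner: after normalizing $v$ so that $\mathcal B'\cap q\paN q=\mathbb Cq$ (which requires the support-projection and $rv\neq 0$ checks), it recognizes $q\paM q=\langle q\paN q, vv^*\rangle$ as the basic construction of $\mathcal B\subseteq q\paN q$ via \cite[Corollary 3.1.9]{Jo81} and \cite[Corollary 1.8]{PP86}, and reads off finiteness of the index from there. You instead invoke the projection (equivalently, finite-bimodule) formulation of Popa's intertwining theorem --- which is indeed part of \cite[Theorem 2.1]{Po03}, though the paper only records the unitary criterion as Theorem \ref{corner} --- and let irreducibility do all the work through the one-line computation $\paM'\cap\langle\paM,e_\paN\rangle=J\paM J\cap (J\paN J)'=J(\paN'\cap\paM)J=\mathbb C1$; this is shorter and avoids the corner manipulations, at the price of quoting a stronger form of the intertwining theorem (your closing bimodule argument is essentially a self-contained proof of the one instance of it that you need). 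For the easy direction the paper uses the downward basic construction $\paM_{-1}e_{-1}=e_{-1}\paM e_{-1}$, while you verify criterion ii) of Theorem \ref{corner} directly with a Pimsner--Popa basis; both are standard and, as you note, neither uses irreducibility. One small point to tidy: justifying $\mathrm{Tr}(1)=[\paM:\paN]$ by evaluating $\mathrm{Tr}$ on a finite decomposition $1=\sum_i m_ie_\paN m_i^*$ presupposes the finite index you are trying to prove; the clean statement is that $\mathrm{Tr}(1)=\dim_\paN L^2(\paM)=[\paM:\paN]$ holds unconditionally as extended positive reals (see \cite{PP86}), so $\mathrm{Tr}(1)<\infty$ does force $[\paM:\paN]<\infty$.
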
 

\begin{proof}  Suppose $\paM\prec_\paM \paN$. Thus one can find nonzero projections $p \in \paM$,$q\in  \paN$, a nonzero partial isometry $v\in q \paM p$, and  a unital $\ast$-homomorphism $\phi:p\paM p\ra q\paN q$ such that \begin{equation}\label{601}\phi(x)v=vx\text{ for all }x\in p \paM p.\end{equation}  Denote by $\mathcal B:= \phi(p\paM p) \subseteq q\paN q$ and notice that by (\ref{601}) we have  $vv^*\in \mathcal B'\cap q\paM q$ and $v^*v\in p\paM p'\cap p\paM p$. Since $\paM$ is a factor we have $v^*v=p$. Moreover by restricting $q$ if necessary we can assume without any loss of generality that the support projection of $E_\paN (vv^*)$ equals $q$.  Equation (\ref{601}) also implies that $\mathcal Bvv^*=vp\paM pv^*= vv^*\paM vv^*$. Since $\paM$ is a factor, this further gives that $ vv^*(\mathcal B'\cap q\paM q )vv^*= (\mathcal Bvv^*)'\cap vv^*\paM vv^*=vv^*( \paM'\cap\paM ) vv^*=\mathbb Cvv^*$. Since $\mathcal B'\cap q\paN q\subseteq \mathcal B'\cap q\paM q$ then there exists  a nonzero projection $r\in \mathcal B'\cap q\paN q$ such that $r (\mathcal B'\cap q\paN q) r= \mathcal B r'\cap r\paN r=\mathbb C r$. Since $q= s(E_\paN(vv^*))$ one can check that $rv\neq 0$. Thus replacing $\mathcal B$ by $\mathcal Br$, $\phi(\cdot)$  by $\phi(\cdot)r$, $q$ by $r$, and $v$ by the partial isometry from the polar decomposition of $rv$ then the intertwining relation (\ref{601}) still holds with the additional assumption that $\mathcal B'\cap q\paN q=\mathbb Cq$. In particular we have that $E_{q\paN q}(vv^*)=c q$ where $c$ is a positive scalar. 

\vskip 0.04in 
\noindent Since $\mathcal B\subseteq q \paN q\subseteq q\paM q$ is an inclusion of II$_1$ factors, $\paN \subseteq \paM$ is irreducible, and  $\mathcal Bvv^*=vv^*\paM vv^*$ then it follows from \cite[Corollary 3.1.9]{Jo81} and \cite[Corollary 1.8]{PP86} that  $\langle q\paN q ,vv^*\rangle\subseteq q\paM q$ is the basic construction of $\mathcal B\subseteq q \paN q$. This entails that $\mathcal B\subseteq q \paN q$ is finite index and moreover $vv^* \langle q\paN q ,vv^*\rangle vv^*=\mathcal Bvv^*= vv^* \paM vv^*$ . Since $\langle q\paN q ,vv^*\rangle$ is a factor then $\langle q\paN q ,vv^*\rangle=q\paM q$  and consequently $q\paN q\subseteq q\paM q$ has finite index. Thus $\paN\subseteq \paM$ has also finite index.  

\vskip 0.04in 
\noindent If $[\paM:\paN]<\infty$ then $\paM\prec_\paM \paN$ follows easily from the fact $\paM_{-1} e_{-1} = e_{-1} \paM e_{-1} $, where $\paM_{-1}$ denotes the downward basic construction, and $e_{-1} \in \paM$ is the corresponding Jones' projection, as in \cite[Corollary 3.1.9]{Jo81}. Note that one does not need $\paN'\cap \paM =\mathbb C1$ for this direction.
\end{proof}

\noindent  For further use we recall next a result due to A. Ocneanu. For a proof the reader may consult \cite[Lemma 15.25]{EK98}.    
\begin{lem}[Ocneanu's central freedom lemma]\label{ocneanu}
		Let $\pN \subseteq \paP \subseteq \paQ$ be separable finite von Neumann algebras, with $\pN$ the hyperfinite factor. If $\omega$ is a free ultrafilter on $\mathbb N$ then we have the following relation 
		\begin{align*}
		(\pN' \cap \paP^{\omega})' \cap \paQ^{\omega}= \pN \vee (\paP' \cap \paQ)^{\omega}.
		\end{align*} 
	\end{lem}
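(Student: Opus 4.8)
The plan is to handle the two inclusions separately, the first being routine and the second carrying all the content. For ``$\supseteq$'': the factor $\pN$ lies in $\paQ^\omega$ and commutes with $\pN'\cap\paP^\omega$ by definition, while $(\paP'\cap\paQ)^\omega$ lies in $\paQ^\omega$ and commutes with all of $\paP^\omega$ — hence a fortiori with $\pN'\cap\paP^\omega\subseteq\paP^\omega$ — since an ultrapower of a commuting pair of subalgebras is again commuting (check on representing sequences). As $(\pN'\cap\paP^\omega)'\cap\paQ^\omega$ is a von Neumann algebra containing both $\pN$ and $(\paP'\cap\paQ)^\omega$, it contains $\pN\vee(\paP'\cap\paQ)^\omega$. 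All the content is in the reverse inclusion.

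For ``$\subseteq$'' I would exploit the approximately finite-dimensional structure of $\pN$. Write $\pN=\overline{\bigcup_k\pN_k}^{sot}$ with $\pN_k\cong M_{2^k}(\mathbb C)$ increasing, $\paS_k:=\pN_k'\cap\pN$ again the hyperfinite II$_1$ factor, $\pN=\pN_k\,\bar\otimes\,\paS_k$, $\pN_k'\cap\paS_k'=\pN'$, and $\bigcap_k\paS_k=\pN'\cap\pN=\mathbb C$. Two standard facts will be used as black boxes: (i) because $\pN_k$ is finite dimensional, the ultrapower commutes with splitting it off, $\pN_k'\cap\paA^\omega=(\pN_k'\cap\paA)^\omega$ for $\paA\in\{\paP,\paQ\}$; and (ii) a quantitative local Kaplansky density estimate for a matrix algebra: there is $\delta_j(\varepsilon)\to 0$ (as $\varepsilon\to 0$, $j$ fixed) so that if an element $z$ of norm $\le 1$ in a tracial von Neumann algebra almost commutes, within $\varepsilon$ in $\|\cdot\|_2$, with each member of a fixed generating unitary family of a copy of $M_{2^j}$, then $\|z-E_{M_{2^j}'}(z)\|_2\le\delta_j(\varepsilon)$.

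Now take $x=(x_m)_m\in(\pN'\cap\paP^\omega)'\cap\paQ^\omega$ with $x_m\in\paQ$, $\|x_m\|\le 1$. The decisive point is that $\pN'\cap\paP^\omega$ is large: for every $f\colon\mathbb N\to\mathbb N$ with $f(m)\to\infty$ along $\omega$ and every $\|\cdot\|_2$-bounded $(c_m)_m$ with $c_m\in\pN_{f(m)}'\cap\paP$, the sequence $(c_m)_m$ lies in $\pN'\cap\paP^\omega$, because $[c_m,\pN_n]=0$ once $f(m)\ge n$. This includes, on the one hand, all ``moving'' matrix blocks $\pN_{[h(m)+1,\,h(m)+j]}$ of $\pN$ with $h(m)\to\infty$, and, on the other hand, moving unitaries of the larger algebras $\pN_{f(m)}'\cap\paP$ (which involve $\paP$, not just $\pN$), and $x$ must commute with all of them. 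Feeding the block probes into estimate (ii) (finitely many generators, so the modulus $\delta_j$ is uniform in $m$) shows that $x$ is, at the $\omega$-level, driven into the relative commutants of longer and longer moving blocks of $\pN$; feeding the $\pN_{f(m)}'\cap\paP$-probes into (i) and the factorization $\paQ=\pN_{f(m)}\,\bar\otimes\,(\pN_{f(m)}'\cap\paQ)$ cuts the ``transverse'' part of $x$ down toward $\paP'\cap\paQ$. A diagonal bookkeeping over $f$, over the block positions $h$, over the block length $j$, and over $k$ (so that $\pN_{f(m)}\nearrow\pN$) then forces $x\in\pN\vee(\paP'\cap\paQ)^\omega$.

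The hard part is exactly that last collapse: one must show that commuting with this entire supply of moving probes really does force $x$ into $\pN\vee(\paP'\cap\paQ)^\omega$, with nothing surviving in between — equivalently, that no element of $\paQ^\omega$ outside $\pN\vee(\paP'\cap\paQ)^\omega$ can commute with all the central-sequence material of $\pN$ sitting in $\paP^\omega$. Concretely this is a \emph{central freeness} statement: the automorphisms of $\pN$ — and, here, the tail subfactors $\paS_k$ — act on the central-sequence algebra with no nontrivial invariant structure, which is Ocneanu's point and is precisely where hyperfiniteness of $\pN$ is indispensable; the identity fails, for instance, for $\pN=L(\mathbb F_2)$, where already $\pN'\cap\pN^\omega=\mathbb C$ breaks the statement. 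A secondary, more routine, layer is the reindexing needed to merge the ``for each $f$'' / ``for each $j$'' statements and the interchange of the $\omega$-limit with the $j\to\infty$ limit. Since the statement is classical, the above is a reconstruction of Ocneanu's argument; a complete proof is in \cite[Lemma 15.25]{EK98}.
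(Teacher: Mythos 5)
A preliminary remark on the comparison itself: the paper offers no proof of this lemma --- it is quoted as a known result of Ocneanu with a pointer to \cite[Lemma 15.25]{EK98} --- so there is no internal argument to match yours against. Judged on its own terms, your proposal completely establishes only the easy inclusion $\pN \vee (\paP'\cap\paQ)^{\omega} \subseteq (\pN'\cap\paP^{\omega})'\cap\paQ^{\omega}$; that part is correct, as is your sanity check that the reverse inclusion must use hyperfiniteness (the statement fails for $\pN=\paP=\paQ$ a non-Gamma factor). For the reverse inclusion you assemble reasonable ingredients: the splittings $\pN=\pN_k\,\bar\otimes\,(\pN_k'\cap\pN)$, the observation that $\prod_{n\to\omega}(\pN_{f(n)}'\cap\paP)\subseteq\pN'\cap\paP^{\omega}$ whenever $f(n)\to\infty$, the identity $\pN_k'\cap\paA^{\omega}=(\pN_k'\cap\paA)^{\omega}$ for finite-dimensional $\pN_k$, and a local Kaplansky-type estimate. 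But you then state explicitly that ``the hard part is exactly that last collapse'' and fall back on the citation. That collapse \emph{is} the lemma: everything preceding it only records that $x$ commutes with a large supply of probes, which is a restatement of the hypothesis, not progress toward the conclusion. So the proposal is an accurate roadmap plus a citation, not a proof.

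Concretely, the missing step is the quantitative argument showing that an element $x\in\paQ^{\omega}$ with $E_{\pN\vee(\paP'\cap\paQ)^{\omega}}(x)=0$ cannot commute with all of $\pN'\cap\paP^{\omega}$. One must manufacture, from the moving blocks and the tail algebras $\pN_{f(n)}'\cap\paP$, specific unitaries $u\in\pN'\cap\paP^{\omega}$ together with an averaging or mixing inequality that bounds $\bigl\|x-E_{\pN\vee(\paP'\cap\paQ)^{\omega}}(x)\bigr\|_2$ by the commutators $\|[u,x]\|_2$ over such probes, with a modulus uniform enough that the $j\to\infty$, $k\to\infty$ and $n\to\omega$ limits can be interchanged. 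Your text gestures at this (``diagonal bookkeeping'') but never produces such an estimate, and without it nothing rules out elements of $\paQ^{\omega}$ lying strictly between $\pN\vee(\paP'\cap\paQ)^{\omega}$ and $(\pN'\cap\paP^{\omega})'\cap\paQ^{\omega}$. Since the paper itself treats the lemma as a black box, simply citing \cite[Lemma 15.25]{EK98}, as the paper does, is the appropriate course; presenting the sketch as a reconstruction of Ocneanu's argument overstates what has actually been established.
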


\noindent With these results at hand we are now ready to answer affirmatively Popa's question from \cite{Po83}.

\begin{thm}\label{solthm}
	Let $\paN \subseteq \paM$ be hyperfinite II$_1$ factors such that  $\paN' \cap \paM = \mathbb{C}$. If $\paM' \cap \paM^{\omega} \subseteq \paN^{\omega}$ then $\paN=\paM$.
\end{thm}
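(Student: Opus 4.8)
The plan is to first use the machinery already set up to reduce the problem to a finite-index inclusion, and then to derive a contradiction from a proper inclusion by running Ocneanu's central freedom lemma on the Jones basic construction. Concretely, I would start by applying Lemma~\ref{nointertwining} to the hypothesis $\paM'\cap\paM^\omega\subseteq\paN^\omega$, which immediately gives $\paM\prec_\paM\paN$; since $\paN\subseteq\paM$ is irreducible, Proposition~\ref{finindexcase} then yields $[\paM:\paN]<\infty$. It therefore remains to show that a proper inclusion is impossible, so I would assume for contradiction that $\paN\subsetneq\paM$.

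Next I would pass to the Jones basic construction $\paM\subseteq\paM_1:=\langle\paM,e_\paN\rangle$, where $e_\paN$ is the Jones projection of $\paN\subseteq\paM$. Because $[\paM:\paN]<\infty$, the algebra $\paM_1$ is again a separable II$_1$ factor. Moreover, using the standard identification $\paM'\cap\paM_1=J(\paN'\cap\paM)J$ (with $J$ the canonical conjugation on $L^2(\paM)$) together with $\paN'\cap\paM=\mathbb C1$, one sees that the inclusion $\paM\subseteq\paM_1$ is again irreducible, i.e.\ $\paM'\cap\paM_1=\mathbb C1$. This is the only structural fact about $\paM_1$ that I will need, beyond separability and finiteness of its trace.

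The heart of the argument is the following clash. On the one hand, $e_\paN$ lies in $\paN'\cap\paM_1$, so, regarded as a constant sequence in $\paM_1^\omega$, it commutes with $\paN^\omega$; since $\paM'\cap\paM^\omega\subseteq\paN^\omega$ by hypothesis, $e_\paN$ commutes with the whole central sequence algebra $\paM'\cap\paM^\omega$, and hence $e_\paN\in(\paM'\cap\paM^\omega)'\cap\paM_1^\omega$. On the other hand, Ocneanu's central freedom lemma (Lemma~\ref{ocneanu}), applied with $\paM$ in the role of both $\pN$ and $\paP$ and with $\paM_1$ in the role of $\paQ$ (legitimate since $\paM$ is the hyperfinite II$_1$ factor), gives
\[
(\paM'\cap\paM^\omega)'\cap\paM_1^\omega \;=\; \paM\vee(\paM'\cap\paM_1)^\omega \;=\; \paM,
\]
the last equality by the irreducibility of $\paM\subseteq\paM_1$. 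Comparing the two facts forces $e_\paN\in\paM$, hence $e_\paN\in\paN'\cap\paM=\mathbb C1$; being a nonzero projection, $e_\paN=1$, which means $E_\paN=\mathrm{id}_\paM$ and therefore $\paN=\paM$ — contradicting $\paN\subsetneq\paM$.

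The routine points (finiteness and separability of $\paM_1$, the commutant formula for the basic construction, and the verification that a constant element of $\paN'\cap\paM_1$ commutes with $\paN^\omega$) I would dispatch quickly. The one step that requires an idea rather than a computation is recognizing the right object to feed into Ocneanu's lemma: the obstruction to $\paN=\paM$ is precisely the Jones projection $e_\paN\in\paM_1\setminus\paM$, irreducibility of $\paN\subseteq\paM$ is exactly what is needed to make $\paM\subseteq\paM_1$ irreducible so that Ocneanu's lemma returns $\paM$ and nothing larger, and the central-sequence hypothesis $\paM'\cap\paM^\omega\subseteq\paN^\omega$ is exactly what drags $e_\paN$ into the relative commutant of $\paM'\cap\paM^\omega$. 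Hyperfiniteness of $\paM$ is used only to license the application of Lemma~\ref{ocneanu}.
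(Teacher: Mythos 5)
Your argument is correct, and after the common first step (Lemma~\ref{nointertwining} combined with Proposition~\ref{finindexcase} to get $[\paM:\paN]<\infty$) it takes a genuinely different route from the paper's. The paper stays inside $\paM^{\omega}$: it shows that $\paM'\cap\paM^{\omega}\subseteq\paN'\cap\paN^{\omega}\subseteq\paN'\cap\paM^{\omega}$ is a chain of II$_1$ factors, establishes $[\paN'\cap\paM^{\omega}:\paM'\cap\paM^{\omega}]=[\paM:\paN]=[\paN'\cap\paM^{\omega}:\paN'\cap\paN^{\omega}]$ via a Pimsner--Popa orthonormal-basis estimate on one side and an explicitly constructed central sequence of projections $e_n\in\paN_n'\cap\paM$ with $E_{\paN}(e_n)=[\paM:\paN]^{-1}$ on the other, deduces $\paM'\cap\paM^{\omega}=\paN'\cap\paN^{\omega}$, and only then applies Ocneanu's lemma twice with $\paQ=\paM$. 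You instead enlarge the ambient algebra to the basic construction $\paM_1=\langle\paM,e_{\paN}\rangle$ and apply Ocneanu's lemma once with $\pN=\paP=\paM$ and $\paQ=\paM_1$: the hypothesis $\paM'\cap\paM^{\omega}\subseteq\paN^{\omega}$ forces the constant sequence $e_{\paN}$ into $(\paM'\cap\paM^{\omega})'\cap\paM_1^{\omega}=\paM\vee(\paM'\cap\paM_1)^{\omega}=\paM$, whence $e_{\paN}\in\paN'\cap\paM=\mathbb C$ and $e_{\paN}=1$. The supporting facts all check out: $\paM_1$ is a separable finite factor because the index is finite, $\paM'\cap\paM_1=J(\paN'\cap\paM)J=\mathbb C$, the trace-preserving inclusion $\paM\subseteq\paM_1$ gives $\paM^{\omega}\subseteq\paM_1^{\omega}$, and $e_{\paN}$ commutes with $\paN^{\omega}$ because it commutes with $\paN$ termwise. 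Your route trades the paper's index computations in the ultrapower for the single observation that the Jones projection is the precise obstruction to $\paN=\paM$; it is shorter and arguably more transparent, at the modest cost of invoking the basic construction and its commutant formula. One stylistic remark: your argument is in fact direct, so the framing ``assume $\paN\subsetneq\paM$ for contradiction'' is unnecessary.
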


\begin{proof} First we notice that from Lemma \ref{nointertwining} and Proposition \ref{finindexcase} it follows that $[\paM:\paN ]<\infty$. Since $\paM' \subseteq \paN'$ and $\paM' \cap \paM^{\omega} \subseteq \paN^{\omega}$ then $\paM' \cap \paM^{\omega}=\paM' \cap \paM^{\omega} \cap \paN' \subseteq \paN^{\omega} \cap \paN'$. So we get the following inclusions:
	\begin{equation}\label{5}
	\paM' \cap \paM^{\omega}  \subseteq \paN' \cap \paN^{\omega} \subseteq \paN' \cap \paM^{\omega}.
	\end{equation}
Since $\paM,\paN$ are McDuff then $\paM' \cap \paM^{\omega} $ and $\paN' \cap \paN^{\omega} $ are von Neumann algebras of type II$_1$. Also, since $\paM' \cap \paM^{\omega} \subseteq \paN' \cap \paM^{\omega}$ then it follows that $\paN' \cap \paM^{\omega}$ is type II$_1$ as well. Also since $\paM$ is hyperfinite then applying Lemma \ref{ocneanu} for $\pN=\paP=\paQ=\paM$ we get $(\paM'\cap \paM^\omega)'\cap\paM^\omega=\paM$ and hence $(\paM' \cap \paM^{\omega})' \cap (\paN' \cap \paM^{\omega}) = \paN' \cap \paM =\mathbb{C}$. In particular, this implies that all algebras displayed in (\ref{5}) are in fact II$_1$ factors. Next we show the following relations 

\begin{equation}\label{6}[\paN' \cap \paM^{\omega}: \paM' \cap \paM^{\omega}] =[\paM:\paN]=[\paN' \cap \paM^{\omega}:\paN' \cap \paN^{\omega}].
\end{equation}

\vskip 0.04in 
\noindent To this end let $\{m_i\}_{1 \leq i \leq n+1}$ be an orthonormal basis of $\paM$ over $\paN$. Then, by \cite[Lemma 3.1]{Po02} it follows that the map $\Phi(x)= [\paM:\paN]^{-1} \sum_i m_ixm_i^*$ implements the conditional expectation from $\paN' \cap \paM^{\omega}$ onto $\paM' \cap \paM^{\omega}$. In addition, the index of $\Phi$ is majorised by $[\paM:\paN]$.  Thus, we get  
\begin{equation}\label{lem:indexineq1}
[\paN' \cap \paM^{\omega}: \paM' \cap \paM^{\omega}] \leq [\paM:\paN].
\end{equation}

	\vskip 0.02in 
\noindent			Now by \cite[Proposition 1.10]{PP86} we have $[\paM:\paN]=[\paM^{\omega}: \paN^{\omega}]$. Set $c= [\paN' \cap \paM^{\omega}:\paN' \cap \paN^{\omega}]^{-1} $ and $\lambda = [\paM^{\omega}: \paN^{\omega}]^{-1}$ and from (\ref{5}) and (\ref{lem:indexineq1}) we infer that $c \geq \lambda$.
				
	\vskip 0.04in 
\noindent	Denote by $E_{\paN'\cap \paM^\omega}$ the conditional expectation from $\paM^{\omega}$ onto $\paN' \cap \paM^{\omega}$ and notice that $E_{\paN'\cap \paM^\omega}\circ E_{\paN^{\omega}}=E_{\paN^{\omega}}\circ E_{\paN'\cap \paM^\omega}$. Let $\paN_n\subseteq \paN$ such that $\paN_n\cong \paM_{2^n}(\mathbb C)$, $\overline{\cup_n \paN_n}^{sot} =\paN$, and $\paN =\paN_n\bar \otimes (\paN_n'\cap \paN)$. Since $\paN_n'\cap \paN\subseteq \paN_n'\cap \paM$ is an inclusion of II$_1$ factors of index $\lambda$ then one can find projections $e_n\in \paN_n'\cap \paM$ such that $E_{\paN_n'\cap \paN}(e_n)=\lambda $ for all $n$. This implies that $E_{ \paN}(e_n)=\lambda $ for all $n$. \vskip 0.04in 
\noindent Altogether, these give $e^\omega=(e_n)_n\in \paN'\cap \paM^\omega$ and $E_{\paN^\omega}(e^\omega)=\lambda$. Thus using \cite[Theorem 2.2]{PP86} we get that $\lambda \geq c$ and hence $\lambda =c$. Summarizing,  
		
	\begin{equation} \label{lem:indexineq2}
		[\paN' \cap \paM^{\omega}:\paN' \cap \paN^{\omega}]= [\paM:\paN].
	\end{equation}

\noindent Altogether, relations (\ref{lem:indexineq1})-(\ref{lem:indexineq2}) conclude relation (\ref{6}).  In turn (\ref{6}) shows that $[\paN' \cap \paN^{\omega}: \paM' \cap \paM^{\omega} ]=1$ and hence 
\begin{equation}\label{7}\paM' \cap \paM^{\omega} =\paN' \cap \paN^{\omega}.\end{equation}
	To finish the proof, we use Lemma \ref{ocneanu}. Indeed setting  $\pN=\paP=\paN$ and $\paQ=\paM$ in Lemma \ref{ocneanu} we get $(\paN' \cap \paN^{\omega})' \cap \paM^{\omega}= \paN$, as $\paN' \cap \paM= \mathbb{C}$. Also letting $\pN=\paP=\paQ=\paM$ in Lemma \ref{ocneanu} we have $(\paM' \cap \paM^{\omega})' \cap \paM^{\omega}= \paM$. Therefore using (\ref{7}) we get $\paN=\paM$, as desired.
	\end{proof}
	


\noindent
\textsc{Department of Mathematics, The University of Iowa, 14 MacLean Hall, Iowa City, IA 52242, U.S.A.}\\
\email {ionut-chifan@uiowa.edu} \\
\email{sayan-das@uiowa.edu}

\end{document}